\documentclass[a4paper, 12pt, reqno]{amsart} 
\usepackage{transparent} 
\usepackage{enumitem}
\usepackage[colorinlistoftodos]{todonotes} 
\usepackage{amsmath, amsthm,bbm, amssymb, amsfonts, fullpage,hyperref} 
\usepackage[maxbibnames=99,backend=biber,bibencoding=utf8,style=numeric,giveninits=true,url=false,doi=false,isbn=false]{biblatex} 
\usepackage{mathtools} 
\usepackage{mathrsfs}
\mathtoolsset{showonlyrefs} 
\usepackage{graphicx, subfigure, color,calc} 
\numberwithin{equation}{section} 

\date{\today}

\theoremstyle{plain} 
\newtheorem{thm}{Theorem}[section] 
 
\newtheorem{cor}[thm]{Corollary} 
\newtheorem{lemma}[thm]{Lemma} 
\newtheorem{conjecture}[thm]{Conjecture} 
 
\newtheorem{prop}[thm]{Proposition} 
\theoremstyle{remark}

\theoremstyle{definition}
\newtheorem{defn}[thm]{Definition}
\newtheorem{obs}[thm]{Observation}

\newtheorem{conj}[thm]{Conjecture}

\newcommand{\lb}{\left(} 
\newcommand{\rb}{\right)}

\newcommand{\lvv}{\left\lvert} 
\newcommand{\rvv}{\right\rvert}

\renewcommand{\epsilon}{\varepsilon} 
\newcommand{\e}{\epsilon}

\renewcommand{\hat}{\overline}

\newcommand{\tmr}{\operatorname{tmr}} 
\newcommand{\rank}{\operatorname{rank}}
\newcommand{\inv}{\operatorname{inv}}

\title{On the failure of Additivity of inversion numbers for dijoins} 

\author{Natalie Behague} 
\author{Jack Cromwell} 
\email{natalie.behague@dcu.ie, jack.cromwell3@mail.dcu.ie}
\address{School of Mathematical Sciences, Dublin City University, Dublin, Ireland}

\begin{document}

 \begin{abstract}
     Let $D$ be an oriented graph. For $X \subseteq V(D)$, the inversion of $X$ in $D$ is the oriented graph that is obtained via changing the direction of all edges which have both endpoints in $X$. The inversion number of $D$, $\operatorname{inv}(D)$, is the minimum number of inversions necessary in order to remove all cycles from $D$. A dijoin $D_1 \to D_2$ is the oriented graph obtained by taking disjoint copies of $D_1,D_2$ and adding edges from all $v \in V(D_1)$ to each $u \in V(D_2)$.
     
     It was conjectured by  Bang-Jensen, da Silva and Havet that $\operatorname{inv}(D_1 \to D_2) = \operatorname{inv}(D_1) + \operatorname{inv}(D_2)$ for all $D_1, D_2$. This was disproved by simultaneous constructions of various $D_1$, $D_2$ with $\operatorname{inv}(D_1 \to D_2) = \operatorname{inv}(D_1)+\operatorname{inv}(D_2)-1$. However, no known constructions achieved more than this gain of 1. Moreover, all of the constructions exploit a gap of 1 between $\operatorname{inv}(D)$ and the minimum rank of a family of related matrices, a gap that can never be more than 1,  suggesting that perhaps something close to the original conjecture might be true.

     We prove that in fact additivity fails badly. In particular, for all $\varepsilon>0$ we show that there exist tournaments $D_1$, $D_2$ with arbitrarily large inversion numbers such that $\operatorname{inv}(D_1 \to D_2) \leq \left(\frac{1}{2} +\varepsilon\right)(\operatorname{inv}(D_1)+\operatorname{inv}(D_2))$.
 \end{abstract}

 \maketitle

 \section{Introduction and Statement of Results}

 Let $D$ be an oriented graph and $X \subseteq V(D)$. The \emph{inversion} of $X$ in $D$ is the oriented graph obtained by flipping the direction of every arc in $D$ that has both endpoints in $X$. For a family of sets $X_1,\dots,X_p$ the inversion of $X_1,\dots,X_p$ is obtained by inverting each set successively, where we note that the order of the inversion doesn't affect the final graph. This family is a \emph{decycling family} of $D$ if these inversions on $D$ produce an acyclic tournament. The \emph{inversion number} $\inv(D)$, is defined to be the smallest integer $p$ for which a $p$-decycling family of $D$ exists. It was first introduced by Belkhechine \cite{Bel}, with early results obtained by Belkhechine, Bouaziz, Boudabbous and Pouzet \cite{Belk}, largely concerning $\inv(n)$, the maximum inversion number of a tournament on $n$ vertices.

For oriented graphs $D_1, D_2$, the dijoin $D_1 \to D_2$ is the oriented graph formed by taking the disjoint union of $D_1$ and $D_2$, and adding all edges $uv$ where $u \in V(D_1)$ and $v \in V(D_2)$. The inversion number of dijoins was first examined by Bang-Jensen, da Silva and Havet \cite{Bang}. It is easy to see that $ \inv(D_1 \to D_2) \leq \inv(D_1) + \inv(D_2)$, by inverting first $D_1$ and then inverting $D_2$. Bang-Jensen, da Silva and Havet \cite{Bang} conjectured that the inversion number is additive, in the sense that
\begin{equation}\label{eq:inv_additivity_conj}
    \inv(D_1 \to D_2) = \inv(D_1) + \inv(D_2),
\end{equation}
and proved this conjecture in the special case of $\inv(D_1) = \inv(D_2) = 1$.

However, this conjecture was disproved by two simultaneous papers \cite{Alon} and \cite{Problems}, which provide examples with
\begin{equation*}
    \inv(D_1 \to D_2) = \inv(D_1) + \inv(D_2) -1.
\end{equation*}
These counterexamples were constructed very carefully, and in such a way that the arcs from $V(D_1)$ to $V(D_2)$ always end up all going in the original direction. In the opposite direction, Behague, Johnston, Morrison and Ogden \cite{Natnote} showed that the bound $\inv(D_1\to D_2)> \inv(D_1)$ holds whenever $\inv(D_1) = \inv(D_2)$. Since the trivial lower bound
\begin{equation*}
    \inv(D_1 \to D_2) \geq \max( \inv(D_1),\inv(D_2))
\end{equation*}
always holds, this leaves open the question of how far from additive the inversion number of a dijoin can be.

It was observed in \cite{Natnote} that the known counterexamples to \eqref{eq:inv_additivity_conj} in \cite{Alon,Problems} are closely tied to a linear algebraic relaxation of the inversion number, tournament minimum rank $\tmr(D)$, which is defined as the minimum rank of a symmetric matrix $M \in \mathbb{F}^2$ which decycles a graph $D$ by flipping all arcs $uv$ for which $M_{uv} = 1$ (see Section \ref{sec:tmr}). These two properties are very closely related: it is shown in \cite{Natnote} that
\begin{equation}
    \tmr(D) \leq \inv(D) \leq \tmr(D) + 1.  \label{rel}
\end{equation}
Further, the dijoins constructed as counterexamples to \eqref{eq:inv_additivity_conj}  preserve additivity of tournament minimum rank, and the saving of one inversion relies entirely on the gap between the two properties. In light of this, it was asked in \cite[Question 17]{Natnote} (see also \cite[Problem 30.2]{BCC}) whether additivity holds for tournament minimum rank, that is, whether
\begin{equation}
    \tmr(D_1 \to D_2) = \tmr(D_1) + \tmr(D_2) \text{ for all }D_1,D_2. \label{eq:tmr_adds}
\end{equation}
 This question keeps alive the spirit of the dijoin conjecture while also ruling out all known counterexamples. It is also the barrier to any substantial failure of additivity for the inversion number since $\inv(D)$ and $\tmr(D)$ differ by at most one so a large drop in $\inv(D_1 \to D_2)$ requires a large drop in $\tmr(D_1 \to D_2)$.
 
Equation \eqref{eq:tmr_adds} was confirmed to hold in the cases $\tmr(D_i) = 1$ and  $\tmr(D_i) = 2$  for some $i \in \{1,2\}$ by Wang, Yang and Lu \cite{Wang} and Behague and Gaudart-Wifling \cite{natcase} respectively. (Note that Wang, Yang and Lu \cite{Wang}  use different notation, but the implication is the same.) 
Behague and Gaudart-Wifling \cite[Conjecture 6.1]{natcase} conjectured that \eqref{eq:tmr_adds} always holds.

Perhaps surprisingly, we show that not only is the tournament minimum rank not additive on dijoins, it can, in fact, be very far from additive. This in turn implies that the inversion number can be very far from additive. We prove the following two related theorems, which show how badly additivity can fail.
\begin{thm} \label{T1}
    For all $\e>0$ there exist tournaments $D_1, D_2$ with $\inv(D_1),\inv(D_2)$ arbitrarily large such that
    \begin{equation*}
        \inv(D_1 \to D_2) \leq \lb\frac{1}{2} +\e\rb(\inv(D_1)+\inv(D_2)).
    \end{equation*}
\end{thm}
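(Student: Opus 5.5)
The plan is to work with the linear-algebraic relaxation $\tmr$ and use \eqref{rel} to pass back to $\inv$. Since $\inv$ and $\tmr$ differ by at most one, it suffices to build tournaments $D_1,D_2$ with two properties. First, $\tmr(D_1\to D_2)$ should be about $p$. Second, $\inv(D_1)$ and $\inv(D_2)$ should each be at least $(1-\e)p$, with $p$ arbitrarily large.

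\textbf{Upper bound: one family decycling the dijoin.} Write a decycling family as vectors $x_1,\dots,x_p\in\mathbb F_2^{V(D_1)\cup V(D_2)}$, and split each as $x_i=(a_i,b_i)$. The flip matrix is $M=\sum_i x_ix_i^T$. It has diagonal blocks $A A^T$ and $B B^T$, which flip arcs inside $D_1$ and inside $D_2$, and a cross block $C=AB^T$. An arc $u\to v$ with $u\in V(D_1)$, $v\in V(D_2)$ is reversed exactly when $C_{uv}=1$. So I would fix two things in advance. The first is a target acyclic order on $V(D_1)\cup V(D_2)$, which may interleave the two parts. The second is the cross pattern it forces, $C_{uv}=[v \text{ precedes } u]$. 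I would then choose $A,B$ (both $n\times p$) with $AB^T=C$. Finally, I would \emph{define} $D_1$ as the transitive tournament, in the target order restricted to $V(D_1)$, reinverted by the sets encoded by $A$, and $D_2$ likewise using $B$. By construction the $p$ sets $x_i$ decycle $D_1\to D_2$, so $\inv(D_1\to D_2)\le p$.

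\textbf{Choosing the cross structure.} The choice of $C$ is the heart of the matter, and I expect it to be the main obstacle. The obvious choice, $C=J$ (all of $D_2$ placed before $D_1$), fails. Requiring $a_u\cdot b_v=1$ for all $u,v$ forces the spans of the $a$'s and $b$'s to be essentially orthogonal, so their dimensions add and we recover additivity. Interleaving the orders helps, because it turns $C$ into a staircase matrix. I would try a ``twisted copy'': take $V(D_2)$ to be a copy $\{v'\}$ of $V(D_1)$ with $b_{v'}=a_v+w$ for a fixed vector $w$. Then $C_{uv'}=(AA^T)_{uv}+a_u\cdot w$, and $C_{uv'}+C_{vu'}=(a_u+a_v)\cdot w$. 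I would look for a global order interleaving $u$ and $u'$ that is compatible with this symmetric-plus-rank-one structure. If that does not close, the fallback is to split the $p$ sets into $m$ shared sets plus $s$ sets private to each side. This gives $\inv(D_1\to D_2)\le m+2s$ against $\inv(D_i)\approx m+s$, and so the ratio $\tfrac12+\e$ requires $s=o(m)$. That in turn demands that the shared part alone already forces large inversion number on each side.

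\textbf{Lower bound: $\inv(D_i)$ is large.} I would prove this by counting. Choose $A$ (and hence $B$) randomly subject to the constraints fixed in the previous step. A tournament on $n$ vertices with $\inv\le q$ is determined by an ordering together with $q$ subsets, so there are at most $n!\,2^{qn}$ of them. Hence, if the construction leaves at least $2^{(1-\e)pn}$ essentially distinct choices for $D_1$, a union bound gives $\inv(D_1)\ge(1-\e)p-O(\log n)$ for some choice. I would need the same simultaneously for $D_2$, which I would get by showing that each bad event has probability below $1/2$. The relation \eqref{rel} then absorbs the additive constant. Taking $p\to\infty$ gives $\inv(D_1\to D_2)\le p\le(\tfrac12+\e)(\inv(D_1)+\inv(D_2))$, with both inversion numbers arbitrarily large.
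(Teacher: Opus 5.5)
\textbf{There is a genuine gap.} Your upper-bound framework is sound, and it actually contains the construction that works. But you stop exactly where the proof has to begin: you never settle on a cross block $C$ that works. Neither the ``twisted copy'' nor the shared/private split is carried through; the latter, as you note yourself, needs the shared part alone to force large inversion number, which is the original problem again.

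The missing idea is to take the perfectly alternating order $v_1<u_1<v_2<u_2<\dots<v_n<u_n$, so that $C_{u_iv_j}=[j\le i]$, i.e.\ $C=L$ is the \emph{full-rank} staircase, together with $p=n$.
\begin{itemize}
\item Then $A$ and $B$ are square and invertible, and $AB^{\top}=L$ forces $B^{\top}=A^{-1}L$. Hence $BB^{\top}=L^{\top}(AA^{\top})^{-1}L$.
\item So the flip pattern of $D_2$ is the image of that of $D_1$ under $G\mapsto L^{\top}G^{-1}L$, which is a bijection of $S_n$. The diagonal of $G$ is free, and an invertible symmetric $G$ with a nonzero diagonal entry factors as $AA^{\top}$.
\item This is precisely Proposition~\ref{propcon}, written there as $A=LB^{-1}L^{\top}$ and $M=PBP^{\top}$ with $P$ having $n$ columns.
\item The bijection is also what makes your ``both bad events are unlikely'' step automatic. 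If one side's matrix is uniform on $S_n$, so is the other's, and a union bound over two sets of equal size suffices.
\end{itemize}

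The lower-bound step, as you formulate it, also fails in this regime. Your count $n!\,2^{qn}$ of tournaments with $\inv\le q$ ignores that only the symmetric matrix $XX^{\top}$ matters.
\begin{itemize}
\item With $p=n$ there are at most $2^{\binom{n}{2}}$ possible $D_1$. That is far fewer than the $2^{(1-\e)n^2}$ your criterion asks for.
\item A union bound against $n!\,2^{qn}$ only gives $\inv(D_i)\ge n/2-O(\log n)$. That yields a ratio close to $1$, no better than the trivial bound $\inv(D_1\to D_2)\le\inv(D_1)+\inv(D_2)$.
\item Your criterion can only be met when $p\lesssim 2\e n$, since there are at most about $2^{pn-\binom{p}{2}}$ symmetric matrices of rank at most $p$. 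In that regime you have no construction.
\end{itemize}

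What is needed is the rank-sensitive count from Lemma~\ref{lem:Mc1}:
\begin{itemize}
\item $N(n,j)\le 2^{1+jn-\binom{j}{2}}$, so at most $n!\,2^{2+\binom{n+1}{2}-\binom{t+1}{2}}$ tournaments have $\tmr\le n-t$.
\item Each tournament arises from $2^n$ matrices, and $|S_n|\ge\tfrac{1}{3}\,2^{\binom{n+1}{2}}$.
\item So a uniformly random $B\in S_n$ gives $\tmr(T(B)),\tmr(T(LB^{-1}L^{\top}))>n-t$ once $\binom{t+1}{2}>n\log_2 n+n+O(1)$, i.e.\ with $t=O(\sqrt{n\log n})$.
\item Then \eqref{rel} yields $\inv(D_1\to D_2)\le n+1$ against $\inv(D_i)\ge n-O(\sqrt{n\log n})$.
\end{itemize}
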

\bigskip 
\begin{thm} \label{T2}
    For all $n$ sufficiently large there exist tournaments $D_1$, $D_2$ on $n$ vertices with $\tmr(D_1),\tmr(D_2) = n - O(\sqrt{n\log{n}})$ such that 
    \begin{equation*}
       \tmr(D_1 \to D_2) \le n = \lb\frac{1}{2} +o(1)\rb(\tmr(D_1)+\tmr(D_2)).
    \end{equation*}
\end{thm}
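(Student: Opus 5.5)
The plan is to build the decycling matrix for $D_1\to D_2$ first, by choosing the target acyclic order on all $2n$ vertices in advance, and then to read off $D_1$ and $D_2$ from it. Let $V(D_1)=\{u_1,\dots,u_n\}$ and $V(D_2)=\{v_1,\dots,v_n\}$. Fix the interleaved linear order
\[L:\ v_1<u_1<v_2<u_2<\dots<v_n<u_n.\]
In the dijoin every cross arc is $u_i\to v_j$. For the flipped tournament to agree with $L$, the arc $u_iv_j$ must be flipped exactly when $v_j$ precedes $u_i$ in $L$, that is, when $j\le i$. So the cross block of the decycling matrix is forced to be $B$, the $n\times n$ lower triangular matrix of ones with $B_{ij}=1$ iff $j\le i$. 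This $B$ has ones on the diagonal, so it is invertible over $\mathbb F_2$.

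Next, take any symmetric invertible $A$ over $\mathbb F_2$ and put
\[M=\begin{pmatrix}A & B\\ B^T & B^TA^{-1}B\end{pmatrix}.\]
The bottom-right block $C=B^TA^{-1}B$ is exactly the Schur complement choice, so $M$ is symmetric and $\operatorname{rank}M=\operatorname{rank}A+\operatorname{rank}(C-B^TA^{-1}B)=n$. Now define $D_1$ to be the tournament on the $u_i$ obtained from the order $u_1<\dots<u_n$ by flipping the arcs $u_iu_j$ with $A_{ij}=1$. Define $D_2$ in the same way on the $v_i$ from $v_1<\dots<v_n$ using the off-diagonal part of $C$. By construction $M$ decycles $D_1\to D_2$ to the order $L$, so $\tmr(D_1\to D_2)\le n$.

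The freedom lies in the off-diagonal part of $A$. It is arbitrary, because the diagonal of $A$ does not affect the flips and can be chosen to make $A$ invertible. Indeed, $\det A$ is multilinear in the diagonal entries and the monomial $\prod_i A_{ii}$ has coefficient $1$, so it is a nonzero polynomial in the diagonal entries and some choice of diagonal gives $\det A=1$. Hence $D_1$ can be any tournament on $n$ vertices.

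It remains to show that some choice makes both $\tmr(D_1)$ and $\tmr(D_2)$ equal to $n-O(\sqrt{n\log n})$. I would do this by counting.
\begin{enumerate}[label=(\roman*)]
\item A tournament with $\tmr\le r$ is determined by a linear order together with a symmetric matrix of rank at most $r$. There are at most $2^{nr-\binom r2+O(n)}$ such matrices, so the number of these tournaments is at most $n!\,2^{nr-r^2/2+O(n)}$.
\item Setting $r=n-K\sqrt{n\log n}$ with $K$ a large constant, this count is $2^{\binom n2-(K^2/2-1-o(1))n\log n}$, a vanishing fraction of all $2^{\binom n2}$ tournaments. So almost every $D_1$ has high $\tmr$.
\item The map $A\mapsto B^TA^{-1}B$ is injective on invertible $A$. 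The off-diagonal part of $A$ determines $A$ up to at most $2^n$ diagonal choices. Hence each $D_2$ arises from at most $2^n$ tournaments $D_1$, and the number of $D_1$ whose $D_2$ has low $\tmr$ is at most $2^n$ times the count in (ii), which is still a vanishing fraction.
\end{enumerate}
A union bound then gives a $D_1$ with both $\tmr(D_1)$ and $\tmr(D_2)$ at least $n-O(\sqrt{n\log n})$. Combined with $\tmr(D_1\to D_2)\le n$, this gives $\tmr(D_1\to D_2)\le n=(\tfrac12+o(1))(\tmr(D_1)+\tmr(D_2))$.

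The main obstacle I expect is step (i): getting an upper bound on the number of symmetric $\mathbb F_2$-matrices of rank at most $r$ that is sharp enough, namely $2^{nr-r^2/2+O(n)}$ rather than the naive $2^{nr+O(n)}$. The $r^2/2$ saving is what makes the threshold $n-O(\sqrt{n\log n})$ appear. I would obtain it by noting that such a matrix is determined by an $r$-dimensional column space, which gives about $2^{r(n-r)}$ choices, together with a symmetric $r\times r$ form on a chosen basis, which gives about $2^{r^2/2}$ choices. The total is $2^{rn-r^2/2+O(n)}$, and up to the $O(n)$ error this is exactly what the union bound needs.
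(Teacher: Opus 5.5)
Your proposal is correct and is essentially the paper's proof. It uses the same interleaved order $v_1<u_1<\dots<v_n<u_n$, the same lower-triangular cross block (your $B$ is the paper's $L$), and the same Schur-complement choice of the other diagonal block, which gives rank $n$; the paper parametrizes by the second block and sets $A=LB^{-1}L^{\top}$, the same family up to the bijection $\Phi$. The union bound is also the same, built from the $2^n$ diagonal choices and the (order, low-rank matrix) count. You differ only in counting over tournaments $D_1$ (justified by your determinant argument) rather than over $S_n$, and in using a direct subspace count instead of MacWilliams's formula. One small slip: in (iii) the factor $2^n$ comes from the diagonal choices of $C=B^{\top}A^{-1}B$ for a given $D_2$, after which injectivity determines $A$ and hence $D_1$; it does not come from the diagonal of $A$.
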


\bigskip

After finding an example of $D_1$ and $D_2$ for which $\inv(D_1\rightarrow D_2) < \inv(D_1) + \inv(D_2)$, Alon, Powierski, Savery, Scott, and Wilmer \cite{Alon} conjectured that examples should exist for almost all possible values of $\inv(D_1)$ and $\inv(D_2)$ (except when both are $\le 2$).
\begin{conjecture}[\protect{\cite[Conjecture 8.9]{Alon}}]
    For all $\ell ,r\in \mathbb{N}$ with $\ell \geq 3$ or $r\geq 3$ there exist oriented graphs $D_1$ and $D_2$ with $\inv(D_1) =\ell$ and $\inv(D_2) =r$, but $\inv(D_1\rightarrow D_2)< \inv(D_1) + \inv(D_2)$.
\end{conjecture}
Wang, Yang and Lu~\cite{Wang} disproved this conjecture by showing that it is false for $\ell=1$ and $r$ even (or vice versa). However, as a corollary of Theorem~\ref{T1}, we recover a version of the conjecture for sufficiently large $\ell$ and $r$.
\begin{thm}\label{thm:all_ell_r}
    There exists $k \in \mathbb{N}$ such that for all $\ell ,r\in \mathbb{N}$ with $\ell,r \geq k$, there exist tournaments $D_1$ and $D_2$ with $\inv(D_1) =\ell$ and $\inv(D_2) =r$, but $\inv(D_1\rightarrow D_2)< \inv(D_1) + \inv(D_2)$. 
\end{thm}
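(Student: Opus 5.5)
The plan is to deduce the statement from Theorem~\ref{T1} applied \emph{once}, with fixed $\varepsilon$, and then to pad the two sides with small blocks that raise the inversion number in steps of at most one. The padding costs at most as much inside the dijoin as it adds to $\inv(D_1)+\inv(D_2)$, so a fixed saving of at least $3$ survives.

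\textbf{Step 1 (a fixed pair with saving at least $3$).} Apply Theorem~\ref{T1} with $\varepsilon=1/4$ to obtain tournaments $D_1,D_2$ with $a=\inv(D_1)$ and $b=\inv(D_2)$ large enough that
\[
\inv(D_1\to D_2)\le \tfrac34(a+b)\le a+b-3 .
\]
Write $d=a+b-\inv(D_1\to D_2)\ge 3$. These two tournaments are now fixed, and I set $k=a+b+2$ (any $k>\max(a,b)$ will do).

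\textbf{Step 2 (padding by source and sink blocks).} For a tournament $G$ and a tournament $T$ on $3$ vertices, $\inv(T\to G)\ge\inv(G)$. If $T$ is transitive, then $\inv(T\to G)=\inv(G)$: decycle $G$ and $T$ remains a transitive source block. Let $C_3$ be the directed triangle, and define $G_0=D_1$ and $G_{j+1}=C_3\to G_j$. Passing from $TT_3\to G_j$ to $C_3\to G_j$ requires reversing one arc of the $TT_3$ block. Reversing a single arc changes the inversion number by at most $1$, since one may add or remove the $2$-set consisting of its endpoints from a decycling family. Hence along the chain
\[
G_j=TT_3\to G_j,\ \dots,\ C_3\to G_j=G_{j+1}
\]
consecutive inversion numbers differ by at most $1$. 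The inversion numbers are also unbounded in $j$. Indeed, by Wang, Yang and Lu~\cite{Wang} tournament minimum rank is additive when one side has $\tmr=1$, so $\tmr(G_j)=\tmr(D_1)+j$, and by \eqref{rel} this gives $\inv(G_j)\ge \tmr(D_1)+j$. By the discrete intermediate value argument, for every $\ell\ge a$ there is a tournament $D_1'=T\to G_j$ with $\inv(D_1')=\ell$. Here $T\in\{TT_3,\ \text{one-arc reversal of }TT_3,\dots\}$ has $\inv(T)\le1$, and
\[
j\le \ell-\tmr(D_1)\le \ell-a+1 .
\]
Moreover, $j=\ell-a+1$ forces $\inv(T\to G_j)\ge j+\tmr(D_1)\ge \ell$, with the first hit occurring at $T=TT_3$. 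So the total number of nontransitive $3$-blocks used, $j+\inv(T)$, can be taken to be at most $\ell-a+1$. Symmetrically, using sink blocks $G\to C_3$, I build $D_2'$ from $D_2$ with $\inv(D_2')=r$ and at most $r-b+1$ blocks of inversion number at most $1$.

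\textbf{Step 3 (upper bound on the dijoin).} The dijoin $D_1'\to D_2'$ consists of the $3$-blocks of $D_1'$ as successive sources, then $D_1\to D_2$, then the $3$-blocks of $D_2'$ as successive sinks. Take an optimal decycling family of $D_1\to D_2$ and add one inversion for each cyclic $3$-block. All arcs between different blocks, and between blocks and $D_1\cup D_2$, keep the order source blocks $\prec D_1\cup D_2\prec$ sink blocks, so the result is acyclic. Therefore
\[
\inv(D_1'\to D_2')\le (a+b-d)+(\ell-a+1)+(r-b+1)=\ell+r-d+2<\ell+r .
\]
This proves the theorem with $k=a+b+2$.

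\textbf{Main obstacle.} The delicate point is hitting the values $\ell$ and $r$ \emph{exactly} while paying at most about one per unit of increase. The interpolation through one-arc reversals inside a $3$-vertex block handles exactness. The additivity of $\tmr$ against a $\tmr$-one block (Wang--Yang--Lu) together with \eqref{rel} is what guarantees that the number of blocks needed is at most $\ell-a+1$, so that the padding cost does not eat the saving $d\ge3$. I would check the off-by-one bookkeeping there most carefully.
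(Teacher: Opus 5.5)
Your argument is correct, and its skeleton is the same as the paper's proof. You fix a pair from Theorem~\ref{T1} with saving at least $3$, then pad $D_1$ with $\overrightarrow{C_3}$ source blocks and $D_2$ with $\overrightarrow{C_3}$ sink blocks. You check that at most $\ell-\inv(D_1)+1$ and $r-\inv(D_2)+1$ blocks are needed. You finish with the trivial bound $\inv(D_1'\to D_2')\le \inv(D_1\to D_2)+t+s$.

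The difference is the input used to hit $\ell$ and $r$ exactly with few blocks. The paper quotes Lemma~\ref{lem:C3_dijoins}, which gives $\inv([(\overrightarrow{C_3})^t,D_1])$ as exactly $\inv(D_1)+t$ or $\inv(D_1)+t-1$, so the right $t$ can simply be read off. You instead use three ingredients:
\begin{enumerate}
\item the trivial increments $0\le \inv(G_{j+1})-\inv(G_j)\le 1$;
\item the lower bound $\inv(G_j)\ge \tmr(G_j)=\tmr(D_1)+j\ge a+j-1$, which follows from the Wang--Yang--Lu additivity of $\tmr$ against a $\tmr$-one side (with $\tmr(\overrightarrow{C_3})=1$) together with \eqref{rel};
\item a discrete intermediate value argument.
\end{enumerate}
This needs only a one-sided growth estimate, not an exact formula for the padded inversion numbers, at the cost of a little bookkeeping.

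That bookkeeping can be shortened. Since $\inv(TT_3\to G_j)=\inv(G_j)$, the interpolation through transitive $3$-blocks never produces a new value. So you can just take the least $j$ with $\inv(G_j)=\ell$. Then $\ell\ge \tmr(D_1)+j$ gives $j\le \ell-a+1$ at once, and your ``moreover'' step is unnecessary. Also, $k=\max(a,b)$ already suffices, as in the paper, since your construction works for every $\ell\ge a$ and $r\ge b$.
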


In the following section we will provide key definitions, some prior results relating symmetric matrices to tournaments, and some counting arguments from the literature. The theorems above are then proved in Section~\ref{sec:main}. We finish with a discussion of open problems and further work in Section~\ref{sec:conclusion}.

\section{Preliminaries}

\subsection{Notation and conventions}
  Throughout, we take an \emph{oriented graph} to be a vertex-labelled directed graph with no loops and at most one arc between any two vertices. A \emph{tournament} is an oriented graph with exactly one arc between any two vertices. In the rest of the paper we will only deal with tournaments. A tournament is \emph{acyclic} if and only if it is \emph{transitive}, meaning its arcs all point from the smaller to the larger element of some linear order on its vertices. All matrices have entries in $\mathbb{F}_2$ unless stated otherwise. We write $S_n$ for the set of invertible symmetric $n \times n$ matrices and $L$ for the $n \times n$ lower triangular matrix with all entries on and below the diagonal equal to 1, noting that $L$ is invertible.

\subsection{Tournament Minimum Rank}\label{sec:tmr}
We first formalize the notion of tournament minimum rank, as introduced in \cite{Natnote}.

\begin{defn}
    Let $D$ be a tournament with $V(D)=(v_1,v_2,\dots,v_n)$. We say that an $n\times n$ symmetric matrix $M$ with entries in $\mathbb{F}_2$ is a \emph{decycling matrix} for $D$, if reversing the orientation of all arcs $v_iv_j$ where $m_{ij}=1$, results in a tournament with no directed cycles.     
\end{defn}
\begin{defn}
Let $D$ be a tournament. The \emph{tournament minimum rank} of $D$, denoted $\tmr(D)$, is the smallest rank of a decycling matrix for $D$.
\end{defn} 

We note that the diagonal entries of $M$ do not affect which arcs are reversed, however they do affect the rank of $M$. This freedom is what allows $\tmr(D)$ to differ from $\inv(D)$. 

\begin{obs} \textbf{(Monotonicity)}
    If $D'$ is an induced subtournament of $D$ then $\tmr(D') \leq \tmr(D)$. This implies
    \begin{equation*}
        \tmr(D_1 \to D_2) \ge \max\{\tmr(D_1),\tmr(D_2)\} \ge \frac{1}{2} ( \tmr(D_1) + \tmr(D_2)).
    \end{equation*}
\end{obs}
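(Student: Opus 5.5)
The statement to prove is the Observation (Monotonicity): if $D'$ is an induced subtournament of $D$ then $\tmr(D')\le\tmr(D)$, together with the consequence for dijoins. The plan is to restrict a decycling matrix to a principal submatrix and use that rank cannot increase under taking submatrices. The dijoin inequality then follows from the fact that $D_1$ and $D_2$ are induced subtournaments of $D_1\to D_2$.

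First, fix a decycling matrix $M$ for $D$ of rank $\tmr(D)$, with rows and columns indexed by $V(D)=(v_1,\dots,v_n)$. Let $I$ be the index set of $V(D')$, and let $M'$ be the principal submatrix $M[I,I]$.

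\begin{itemize}
\item $M'$ is symmetric, being a principal submatrix of a symmetric matrix.
\item Reversing the arcs $v_iv_j$ of $D'$ with $m_{ij}=1$ gives exactly the subtournament, induced on $V(D')$, of the tournament obtained from $D$ by reversing via $M$. This holds because $D'$ is induced and only entries with both indices in $I$ matter.
\item An induced subtournament of an acyclic tournament is acyclic. Hence $M'$ is a decycling matrix for $D'$.
\item Any submatrix has rank at most that of the whole matrix: the rows of $M'$ are restrictions of rows of $M$, so linear dependencies persist. Thus $\rank M'\le\rank M$.
\end{itemize}

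Together these give $\tmr(D')\le\rank M'\le\tmr(D)$.

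For the dijoin, $D_1$ and $D_2$ are induced subtournaments of $D_1\to D_2$, being the subtournaments on $V(D_1)$ and $V(D_2)$ respectively. Monotonicity therefore gives $\tmr(D_1\to D_2)\ge\max\{\tmr(D_1),\tmr(D_2)\}$. Finally, a maximum of two numbers is at least their average.

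There is no real obstacle here. The only point needing care is that the diagonal entries of $M'$ are inherited from $M$, and they do not affect decycling. So $M'$ is a legitimate decycling matrix whatever its diagonal, and its rank is bounded by that of $M$.
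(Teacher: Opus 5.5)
Your proposal is correct and follows the paper's argument: take a decycling matrix $M$ for $D$ of rank $\tmr(D)$, restrict to the principal submatrix $M'$ indexed by $V(D')$, observe that reversing via $M'$ gives an induced subtournament of an acyclic tournament, and use $\rank(M')\le\rank(M)$. The dijoin bound then follows because $D_1$ and $D_2$ are induced subtournaments of $D_1\to D_2$, and a maximum is at least an average.
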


To see this, let $M$ be a decycling matrix for $D$ of rank equal to $\tmr(D)$, and let $M'$ be its principal submatrix indexed by $V(D')$. Reversing the arcs of $D'$ according to $M'$ gives a subtournament of an acyclic tournament and hence is acyclic, and $\rank(M') \leq \rank(M)$. The same argument applies to the inversion number so the bounds in Theorems \ref{T1} and \ref{T2} are best possible up to the $o(1)$ factor.

The following construction allows us to pass from symmetric matrices to tournaments.

\begin{defn}
    For a symmetric $n \times n$ matrix $G$, let $T(G)$ be the tournament on vertex set $[n] = \{1,\dots,n\}$ in which, for $i < j$, the arc is $i \to j$ if $G_{ij} = 0 $ and $j \to i$ if $G_{ij}=1$.
\end{defn}

\begin{obs} \label{G1}
    Let  $G$ be a symmetric $n \times n$ matrix.
    \begin{enumerate}
        \item $G$ is a decycling matrix for $T(G)$, as reversing the arcs of $T(G)$ according to $G$ yields the transitive tournament $1\to2\to\dots\to n$.
        \item $T(G)$ depends only on the entries of $G$ above the diagonal (i.e.~does not depend on the diagonal entries) and $G \to T(G)$ induces a bijection between symmetric matrices with zeros on the diagonal and tournaments on $[n]$.
        \item For every tournament $D$ on $[n]$ there are exactly $2^n$ symmetric matrices $G$ with $T(G) = D$, one for each choice of diagonal.  \label{item:2^n diagonal choices}
    \end{enumerate}
\end{obs}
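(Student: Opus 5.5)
The statement to prove is Observation~\ref{G1}, whose three parts follow directly from the definitions. My plan is to verify each part in turn, using only the definition of $T(G)$ and the definition of a decycling matrix.

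For part (1), fix $i<j$ and split into two cases according to $G_{ij}$. If $G_{ij}=0$, the arc of $T(G)$ is $i\to j$ and it is not reversed. If $G_{ij}=1$, the arc is $j\to i$ and it is reversed, since reversals happen exactly where $M_{ij}=1$ and here $M=G$. In both cases the resulting arc is $i\to j$. The reversed tournament is therefore the transitive tournament $1\to 2\to\cdots\to n$, which is acyclic, so $G$ is a decycling matrix for $T(G)$.

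For part (2), note that the definition of $T(G)$ consults only the entries $G_{ij}$ with $i<j$, so the diagonal plays no role. For the bijection I will write down an inverse map. Given a tournament $D$ on $[n]$, define $G_D$ by setting $(G_D)_{ij}=(G_D)_{ji}=1$ for $i<j$ exactly when the arc of $D$ is $j\to i$, and setting all diagonal entries to $0$. Then $G_D$ is symmetric with zero diagonal and $T(G_D)=D$, so the map is surjective onto tournaments on $[n]$. For injectivity, a symmetric matrix with zero diagonal is determined by its entries above the diagonal, and these entries can be read off from $T(G)$.

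For part (3), the same reasoning shows that $T(G)=D$ holds exactly when the above-diagonal entries of $G$ agree with those of $G_D$. Symmetry then fixes the below-diagonal entries, so the only freedom left is the diagonal. The $n$ diagonal entries range over $\mathbb{F}_2^n$, giving exactly $2^n$ such matrices.

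None of these steps is a real obstacle. The only care needed is in part (3): the diagonal entries must be treated as unconstrained, since they do not affect $T(G)$.
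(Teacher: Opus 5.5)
Your proposal is correct and matches the paper, which states this as an observation justified directly from the definition of $T(G)$. Your case check on $G_{ij}$ is exactly the paper's justification for part (1), and parts (2) and (3) follow, as you argue, because $T(G)$ reads only the above-diagonal entries while the $n$ diagonal entries are free.
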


\bigskip

\subsection{Counting Arguments}
We will need to bound the number of symmetric matrices of given ranks. The following count is due to MacWilliams \cite{MacW}. 

\begin{lemma}[{\cite[Theorem 2]{MacW}}] \label{lem:Mc1}
    Let $N(n,k)$ denote the number of $n \times n$  symmetric matrices with rank $k$ and with entries in a finite field $\mathbb{F}_q$. For $0 \leq k \le n$,
    \begin{equation*}
        N(n,k) = \prod_{i=1}^{\left \lfloor{k/2}\right \rfloor } \frac{q^{2i}}{q^{2i}-1} \times \prod_{i=0}^{k-1} (q^{n-i}-1).
    \end{equation*}
\end{lemma}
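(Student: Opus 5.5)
The plan is to prove the formula by induction on $n$. The induction will rest on a recursion obtained by bordering a symmetric $n\times n$ matrix with one extra row and column. Write a symmetric $(n+1)\times(n+1)$ matrix over $\mathbb{F}_q$ as
\begin{equation*}
    M=\begin{pmatrix} A & b\\ b^{T} & c\end{pmatrix},
\end{equation*}
with $A$ symmetric $n\times n$ of rank $r$, $b\in\mathbb{F}_q^n$ and $c\in\mathbb{F}_q$. I will determine $\rank(M)$ in terms of $r$ and the pair $(b,c)$.

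\emph{Case 1: $b\notin\operatorname{col}(A)$.} Appending the column $b$ to $A$ raises the rank to $r+1$. Since $b^{T}$ is not in the row space of $A$ (which equals $\operatorname{col}(A)^{T}$ by symmetry), appending the final row raises it again. So $\rank(M)=r+2$ for every $c$. This case contributes $(q^n-q^r)\,q$ pairs $(b,c)$.

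\emph{Case 2: $b=Ax$ for some $x$.} Subtract $x$-combinations of the first $n$ columns from the last column, and symmetrically for rows. This reduces $M$ to $\operatorname{diag}(A,\,c-x^{T}Ax)$. The scalar $x^{T}Ax=b^{T}x$ does not depend on the choice of $x$, because $x$ is determined modulo $\ker A$ and $b\perp\ker A$. Hence for each of the $q^r$ such $b$, exactly one $c$ gives rank $r$ and the other $q-1$ values give rank $r+1$.

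Summing over $A$ gives the recursion
\begin{equation*}
    N(n+1,k)=q^{k}N(n,k)+(q-1)q^{k-1}N(n,k-1)+q\,(q^{n}-q^{k-2})N(n,k-2),
\end{equation*}
with the conventions $N(n,k)=0$ for $k<0$ or $k>n$. The base cases are $N(n,0)=1$ and $N(0,0)=1$, both of which agree with the claimed formula (empty products).

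The remaining step is to check that the right-hand side of Lemma~\ref{lem:Mc1}, call it $F(n,k)$, satisfies the same recursion; induction on $n$ then finishes the proof. Dividing the recursion through by $F(n,k-2)$ turns it into an identity among the ratios $F(n,k)/F(n,k-2)$, $F(n,k-1)/F(n,k-2)$ and $F(n+1,k)/F(n,k-2)$. Each ratio is a short product: a few factors of the form $q^{n-i}-1$, together with the prefactor $\prod q^{2i}/(q^{2i}-1)$, which changes by exactly one factor $q^{2\lfloor k/2\rfloor}/(q^{2\lfloor k/2\rfloor}-1)$ when $k$ increases by $2$, and only when the parity allows when $k$ increases by $1$. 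So the verification reduces to a polynomial identity in $q$ and $q^n$.

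I expect this algebraic check to be the main obstacle. The change in $\lfloor k/2\rfloor$ behaves differently according to the parity of $k$, so I will split into $k$ even and $k$ odd and clear denominators in each case. I will also check the edge case $k=n+1$ separately, where $N(n,k)=N(n,k-1)\cdot 0$-type terms vanish, since it requires consistency with the vanishing factor $q^{n+1-k}-1$. Should the direct identity become unwieldy, a fallback is to count instead the non-degenerate symmetric $k\times k$ matrices via the same bordering recursion. One would then use that a rank-$k$ symmetric matrix is determined by its $k$-dimensional column space $W$ together with a non-degenerate form on $W$, so that $N(n,k)$ equals the number of $k$-subspaces $\binom{n}{k}_q$ times the number of invertible symmetric $k\times k$ matrices. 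The recursion for the latter is simpler, since only Case 1 and the rank-$(r+1)$ part of Case 2 contribute.
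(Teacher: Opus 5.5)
The paper gives no proof of this lemma; it quotes MacWilliams. There is nothing internal to compare your argument against, so your proposal serves as a self-contained replacement, and it is correct. The bordering case analysis is right: when $b\notin\operatorname{col}(A)$ you get rank $r+2$ for all $q$ values of $c$, and otherwise rank $r$ for exactly one $c$, because $b^{\top}x$ does not depend on the solution $x$ of $Ax=b$. The resulting recursion is also right.

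The algebraic check you flag as the main obstacle is short. Write $F(n,k)=P(k)Q(n,k)$ with $P(k)=\prod_{i=1}^{\lfloor k/2\rfloor}\frac{q^{2i}}{q^{2i}-1}$ and $Q(n,k)=\prod_{i=0}^{k-1}(q^{n-i}-1)$. Use the relations $Q(n+1,k)=(q^{n+1}-1)Q(n,k-1)$, $Q(n,k)=(q^{n-k+1}-1)Q(n,k-1)$, $Q(n,k-1)=(q^{n-k+2}-1)Q(n,k-2)$ and $q(q^{n}-q^{k-2})=q^{k-1}(q^{n-k+2}-1)$. Divide the recursion by $P(k-2)Q(n,k-2)(q^{n-k+2}-1)$, which is nonzero for $2\le k\le n+1$, and set $\rho=P(k)/P(k-2)$ and $\sigma=P(k-1)/P(k-2)$. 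You get
\[
\rho\,(q^{n+1}-1)=\rho\,q^{k}(q^{n-k+1}-1)+\sigma\,(q-1)q^{k-1}+q^{k-1},
\quad\text{i.e.}\quad
\rho\,(q^{k}-1)=q^{k-1}\bigl(1+(q-1)\sigma\bigr),
\]
which no longer involves $n$. For even $k$ one has $\rho=q^{k}/(q^{k}-1)$ and $\sigma=1$. For odd $k\ge 3$ one has $\rho=\sigma=q^{k-1}/(q^{k-1}-1)$. Each case is a one-line check.

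Two small corrections to your bookkeeping:
\begin{enumerate}
\item Your division by $F(n,k-2)$ is unavailable for $k=1$, where $F(n,-1)$ is only the convention $0$. Check that case directly: $q(q^{n}-1)+(q-1)=q^{n+1}-1$.
\item The case $k=n+1$ needs no separate treatment. $F(n,n+1)=0$ automatically through the factor $q^{0}-1$, matching the convention $N(n,n+1)=0$.
\end{enumerate}
With these, the induction on $n$ from $N(0,0)=1$ is complete. Your fallback via $\binom{n}{k}_q$ times the number of invertible symmetric $k\times k$ matrices is also valid (write $M=USU^{\top}$ with $U$ a basis matrix of the column space), but it is not needed.
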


Applying the above result with $k =n$ and $q=2$ gives the following easy corollary.
 \begin{cor}  \label{lem:Mc2}
     Let $S_n$ denote the set of invertible $n\times n$ symmetric matrices over $\mathbb{F}_2$. We have
     \begin{equation*}
         |S_n| = 2^{n(n+1)/2} \prod_{1 \leq i \le n, i \text{ odd  }} \lb1-\frac{1}{2^i}\rb
     \end{equation*}
 \end{cor}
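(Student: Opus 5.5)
The corollary follows by putting $q=2$ and $k=n$ in Lemma~\ref{lem:Mc1} and simplifying the two products. First I will pull the power of $2$ out of each factor of the second product. This gives
\begin{equation*}
\prod_{i=0}^{n-1}(2^{n-i}-1)=\prod_{j=1}^{n}(2^j-1)=2^{n(n+1)/2}\prod_{j=1}^{n}\lb1-\frac{1}{2^j}\rb,
\end{equation*}
using $\sum_{j=1}^n j = n(n+1)/2$.

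Next I will simplify the first product. For $q=2$ each factor is $\frac{2^{2i}}{2^{2i}-1}=\lb1-\frac{1}{2^{2i}}\rb^{-1}$. As $i$ ranges over $1,\dots,\lfloor n/2\rfloor$, the exponent $2i$ ranges over exactly the even integers $j$ with $1\le j\le n$. Hence
\begin{equation*}
\prod_{i=1}^{\lfloor n/2\rfloor}\frac{2^{2i}}{2^{2i}-1}=\prod_{1\le j\le n,\ j\text{ even}}\lb1-\frac{1}{2^j}\rb^{-1}.
\end{equation*}

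Multiplying the two expressions, the factors with even $j$ cancel. What remains is
\begin{equation*}
N(n,n)=2^{n(n+1)/2}\prod_{1\le j\le n,\ j\text{ odd}}\lb1-\frac{1}{2^j}\rb .
\end{equation*}
By definition $|S_n|=N(n,n)$, so this is the claimed formula. The only step needing care is checking that the even exponents $2i$ for $i\le\lfloor n/2\rfloor$ cover exactly the even $j\le n$, and this holds whether $n$ is odd or even.
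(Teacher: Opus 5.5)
Your proposal is correct and follows the paper's route exactly: the paper obtains this as an immediate corollary by setting $k=n$ and $q=2$ in Lemma~\ref{lem:Mc1}. You supply the simplification the paper leaves implicit, namely factoring $2^{n(n+1)/2}$ out of the second product and cancelling its even-index factors against the first product.
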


In addition, we can apply Lemma~\ref{lem:Mc1} to obtain a bound on the number of tournaments with small tournament minimum rank.
 \begin{cor} \label{lem:count}
     Let $M(n,k)$ denote the number of tournaments on $n$ vertices with ${\tmr \leq k}$. Then 
     \begin{equation*}
         M(n,k) \leq n! \sum_{j=0}^k N(n,j) = n!  \sum_{j=0}^k \prod_{i=1}^{\left \lfloor{j/2}\right \rfloor } \frac{2^{2i}}{2^{2i}-1} \times \prod_{i=0}^{j-1} (2^{n-i}-1)
     \end{equation*}
 \end{cor}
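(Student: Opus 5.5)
We prove Corollary~\ref{lem:count} by a counting argument. Every tournament on $n$ vertices with $\tmr\le k$ has a decycling matrix of rank at most $k$, and we will associate to it a pair consisting of a permutation and a symmetric matrix of rank at most $k$. Since there are $n!$ permutations and $\sum_{j=0}^k N(n,j)$ such matrices, the bound follows once the association is shown to be injective.

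\textbf{The association.} Let $D$ be a tournament on $[n]$ with $\tmr(D)\le k$, and fix a decycling matrix $M$ for $D$ with $\rank(M)\le k$. Reversing the arcs of $D$ indicated by $M$ gives an acyclic tournament, and hence a transitive one. So there is a permutation $\sigma$ of $[n]$ such that, after reversal, every arc goes from $\sigma(a)$ to $\sigma(b)$ whenever $a<b$.

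\textbf{Relabelling.} Define $D^\sigma$ to be the tournament on $[n]$ with $a\to b$ whenever $\sigma(a)\to\sigma(b)$ in $D$. Let $P$ be the permutation matrix of $\sigma$ and set $M^\sigma = P^{T} M P$. Then $M^\sigma$ is symmetric and $\rank(M^\sigma)=\rank(M)\le k$. The arcs of $D^\sigma$ reversed by $M^\sigma$ correspond exactly to the arcs of $D$ reversed by $M$. After reversal, therefore, all arcs of $D^\sigma$ go from smaller to larger labels.

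\textbf{Recovering $D$.} Consequently, for $a<b$, the arc of $D^\sigma$ is $a\to b$ precisely when $M^\sigma_{ab}=0$. By definition this says $D^\sigma = T(M^\sigma)$. Since $D$ is obtained from $D^\sigma$ by relabelling vertices through $\sigma$, it is determined by the pair $(\sigma, M^\sigma)$.

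\textbf{Counting.} The map $(\sigma,G)\mapsto (T(G)$ relabelled by $\sigma)$ is defined on pairs with $\sigma$ a permutation of $[n]$ and $G$ a symmetric matrix of rank at most $k$. By the previous step its image contains every tournament with $\tmr\le k$. The number of such pairs is
\[
n!\sum_{j=0}^k N(n,j),
\]
which gives the inequality. Finally, substituting Lemma~\ref{lem:Mc1} with $q=2$ yields the explicit product formula stated in the corollary.

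\textbf{Main point of care.} The only step needing attention is checking that conjugating by the permutation matrix preserves both symmetry and rank. It does, because $P^{T}MP$ is symmetric whenever $M$ is, and $P$ is invertible. The argument is deliberately wasteful: it overcounts via diagonal choices and multiple orderings, but only an upper bound is claimed.
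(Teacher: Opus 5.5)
Your proposal is correct and follows essentially the same route as the paper: map each tournament with $\tmr\le k$ to a pair (linear order, symmetric matrix of rank at most $k$) from which it can be recovered, and count the $n!\sum_{j=0}^k N(n,j)$ such pairs. The only difference is cosmetic. You make the recovery step explicit by conjugating with a permutation matrix so that $D^\sigma = T(M^\sigma)$, whereas the paper simply reverses the arcs of the transitive tournament on $<$ according to $M$.
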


 \begin{proof}
     If $\tmr(D) \leq k$, then there exists a decycling matrix $M$ for $D$ of rank at most $k$. Reversing the arcs of $D$ according to $M$ gives a corresponding  linear order $<$ on the vertices of $D$. The tournament $D$ is recovered from $(<,M)$ by reversing the arcs of the transitive tournament according to $M$. Thus the set of tournaments with $\tmr \leq k$ can be indexed by the set of pairs $(<,M)$ of linear orders and symmetric matrices of rank $ \leq k$. There are $n!$ orders and $\sum_{j \leq k} N(n,j)$ such matrices. The result now follows directly from Lemma \ref{lem:Mc1} with $q = 2$.
 \end{proof}

\section{Proof of Main Results}\label{sec:main}

In this section we prove Theorems \ref{T1}, \ref{T2} and \ref{thm:all_ell_r}. The key result is Theorem  \ref{T2} and the others will follow from the same arguments. Let us first outline the idea of the proof.

It was shown in \cite{natcase} that any decycling matrix of the dijoin $D_1 \to D_2$ has the form
\begin{equation*}
\begin{pmatrix}
    A &C\\ C^{\top} & B
\end{pmatrix}
\end{equation*}
where $A$ and $B$ are decycling matrices for $D_1$ and $D_2$ respectively, and $C$ a staircase matrix recording how the two acyclic orders intertwine. The counterexamples of \cite{Alon,Problems} give  $C=0$, so the ranks of the two diagonal blocks simply add. Our approach is to choose $C$ and $A$ so that the rows of the first block row lie in the row space of the second, such that the rank of the whole matrix is equal to the rank of $B$. Specifically, we take $C=L$ lower triangular, $B$ invertible and $A = L B^{-1}L^{\top}$. 
We first prove Proposition \ref{propcon} which bounds $\tmr(D_1 \rightarrow D_2)$ for $D_2 = T(B)$ and $D_1 = T(LB^{-1}L^{\top})$ arising from such a construction.

The remaining work is then to show that $B$ can be chosen so that both $D_1$ and $D_2$ have `large' tournament minimum rank sufficiently close to their number of vertices.  For $n$ large, a random tournament $D$ on $n$ vertices has $\inv(D) \ge n - 2\sqrt{n\log{n}}$, by a result of \cite[Theorem 1.5]{Problems}. This then suggests that selecting $B$ randomly is likely to work, and this is indeed what we do.

\bigskip

Let us first prove the following proposition, which we will use to get an upper bound on $\tmr(D_1 \rightarrow D_2)$ in Theorem~\ref{T2}.

\begin{prop} \label{propcon}
    Let $n \geq 1$, $B \in S_n$, and set $A \coloneqq LB^{-1}L^{\top}$, where $L$ is the $n \times n$ lower triangular matrix with all entries on and below the diagonal equal to 1. Then $A \in S_n$ and the tournaments
        $D_1 \coloneqq T(A)$ and $D_2 \coloneqq T(B)$ satisfy $\tmr(D_1 \to D_2) \le n$.    
\end{prop}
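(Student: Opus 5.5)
The plan is to write down an explicit decycling matrix for $D_1 \to D_2$ of rank exactly $n$. First I will dispose of the claim $A \in S_n$. Symmetry holds because $A^{\top} = L (B^{-1})^{\top} L^{\top} = L B^{-1} L^{\top}$, using that the inverse of a symmetric matrix is symmetric. Invertibility holds because $L$, $B^{-1}$ and $L^{\top}$ are all invertible.

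Next I label the vertices of $D_1 = T(A)$ as $a_1,\dots,a_n$ and those of $D_2 = T(B)$ as $b_1,\dots,b_n$, so that every arc between the two parts is $a_i \to b_j$. I consider the symmetric $2n \times 2n$ matrix
\begin{equation*}
M = \begin{pmatrix} A & L \\ L^{\top} & B \end{pmatrix},
\end{equation*}
with rows and columns indexed by $a_1,\dots,a_n,b_1,\dots,b_n$. Thus $M_{a_i b_j} = L_{ij}$, which equals $1$ exactly when $j \le i$.

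To see that $M$ decycles $D_1 \to D_2$, I look at the three types of arcs.
\begin{enumerate}
\item By Observation~\ref{G1}(1), reversing inside $D_1$ according to $A$ produces the transitive order $a_1 \to a_2 \to \dots \to a_n$.
\item In the same way, reversing inside $D_2$ according to $B$ produces $b_1 \to \dots \to b_n$.
\item For a cross arc $a_i \to b_j$, it is reversed to $b_j \to a_i$ precisely when $j \le i$, and otherwise stays $a_i \to b_j$.
\end{enumerate}
I then check that all these arcs agree with the single linear order
\begin{equation*}
b_1 < a_1 < b_2 < a_2 < \dots < b_n < a_n .
\end{equation*}
In this order $b_j$ precedes $a_i$ iff $j \le i$, which matches the cross arcs. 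The two internal orders are also respected. So the resulting tournament is transitive, hence acyclic, and $M$ is a decycling matrix.

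It remains to bound the rank, using a Schur complement argument. Since $B$ is invertible, I apply block row and column operations, which are invertible and so preserve rank:
\begin{equation*}
\begin{pmatrix} I & -LB^{-1} \\ 0 & I \end{pmatrix} M \begin{pmatrix} I & 0 \\ -B^{-1}L^{\top} & I \end{pmatrix} = \begin{pmatrix} A - LB^{-1}L^{\top} & 0 \\ 0 & B \end{pmatrix} = \begin{pmatrix} 0 & 0 \\ 0 & B \end{pmatrix}.
\end{equation*}
This gives $\rank M = \rank B = n$, and therefore $\tmr(D_1 \to D_2) \le n$.

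The only step needing care is the interleaving in the decycling check. One has to confirm that the staircase pattern of $L$ is consistent with one global order. This is exactly why the rows of $L$ must correspond to $a_i$ and its columns to $b_j$, with the condition being $j \le i$. Everything else is routine linear algebra.
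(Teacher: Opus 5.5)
Your proof is correct and matches the paper's argument: the same block matrix $M$, the same interleaved order $b_1<a_1<\dots<b_n<a_n$, and the same symmetry and invertibility check for $A$. The only difference is cosmetic. You get $\rank M=\rank B=n$ by Schur-complement congruence to $\begin{pmatrix} 0 & 0 \\ 0 & B \end{pmatrix}$. The paper instead writes $M=PBP^{\top}$ with $P=\begin{pmatrix} LB^{-1} \\ I_n \end{pmatrix}$ for the upper bound and uses $B$ as a principal submatrix for the lower bound. Both rest on the same fact, that the Schur complement $A-LB^{-1}L^{\top}$ vanishes.
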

\begin{proof}
    The matrix $A =  LB^{-1}L^{\top}$ is invertible as it is a product of invertible matrices, and it is symmetric since the inverse of a symmetric matrix is again symmetric. 

     Let $u_1,\ldots,u_n$ be the vertices of $D_1 = T(A)$ and let  $v_1,\ldots,v_n$ be the vertices of $D_2 = T(B)$. 
      By definition, reversing the arcs of $D_1 = T(A)$ as according to $A$ gives the transitive tournament $u_1 <_1 \dots <_1 u_n$ and doing the same for $D_2$ using $B$ gives $v_1 <_2\dots<_2 v_n$.

    Consider the symmetric $2n \times 2n$ matrix
    \begin{equation*}
        M = \begin{pmatrix}
            A &L\\L^{\top} &B
        \end{pmatrix}.
    \end{equation*}
    We will show that $M$ is a decycling matrix for $D_1 \to D_2$, with the rows and columns of M indexed by the vertices in the order $u_1,\ldots,u_n,v_1,\ldots,v_n$. 
    Flip all arcs of $D_1 \to D_2$ according to the places where $M_{ij} = 1$. Following the reversal, arcs within $\{u_1,\ldots,u_n\}$ and $\{v_1,\ldots,v_n\}$ agree with $<_1$ and $<_2$, respectively and hence with $<$. Every arc between $u_i$ and $v_j$ for some $i,j \in \{1,\ldots, n\}$ begins as $u_i \to v_j$ and is reversed exactly when $v_j < u_i$. 
    Hence the arcs of the resulting tournament agree with the alternating order
    \begin{equation*}
        v_1<u_1<v_2<u_2<\dots<v_n<u_n,
    \end{equation*}
   and thus $M$ is decycling.

     Setting 
     \begin{equation*}
         P = \begin{pmatrix}
             LB^{-1} \\I_n
         \end{pmatrix},
     \end{equation*}
    allows us to write
    \begin{equation*}
      PBP^{\top} = \begin{pmatrix}
          LB^{-1}L^{\top} & L \\ L^{\top} & B
      \end{pmatrix}  = M.
    \end{equation*}
    Since $P$ has $n$ columns, $\rank(M) \le n$. Also, as $B$ is a principal submatrix of $M$, $\rank(M) \geq \rank(B) =n$. Hence $\rank(M) =n$ and thus $\tmr(D_1 \to D_2) \le n$.
\end{proof}

We are now in a position to prove Theorems \ref{T1}, \ref{T2} and \ref{thm:all_ell_r}. For the sake of simplicity we will first prove Theorem \ref{T2} and then deduce Theorem \ref{T1} using $\tmr(D) \leq \inv(D) \leq \tmr(D)+1$ as in \eqref{rel}.
\medskip
\begin{proof}[Proof of Theorem \ref{T2}]
    To begin, let $n \in \mathbb{N}$ be large and let 
      \begin{equation*}
        t=  \left \lceil \sqrt{2n\log_2 n+2n}~ \right \rceil.
      \end{equation*}
    Let $k = n-t$, 
    noting that $ 0 \leq k \le n$ for $n$ sufficiently large. 
    
    For convenience, let $\Phi : S_n \to S_n$ be the map sending $\Phi(G) = LG^{-1}L^{\top}$, as used in Proposition \ref{propcon}. Note that $\Phi$ is a bijection with inverse $\Psi(H) = L^{\top} H^{-1} L$.
    We will show there exists some $B \in S_n$ with 
    \begin{equation*}
        \tmr(T(B)) \geq k+1 \text{ and} \tmr(T(\Phi(B))) \geq k+1.
    \end{equation*}
    Proposition \ref{propcon} will then finish the proof. 
  
    Let $T$ be the set of tournaments on $[n]$ with $\tmr \leq k$, so $|T| = M(n,k)$. Set
    \begin{equation*}
        \mathcal{S}_{n, k} = \left\{ G \in S_n : T(G) \in T \right\}
    \end{equation*}
    and note that 
    \[
     \Phi^{-1}(\mathcal{S}_{n, k}) = \left\{ G \in S_n : T(\Phi(G)) \in T \right\}.
    \]
    Any $B \in S_n \setminus ({\mathcal{S}_{n, k} \cup \Phi^{-1}(\mathcal{S}_{n, k})} )$ has the required property so it suffices to show that $ |{{\mathcal{S}_{n, k} \cup \Phi^{-1}(\mathcal{S}_{n, k})}}|< |S_n|$. 
    
    We will start by bounding $ |{{\mathcal{S}_{n, k} \cup \Phi^{-1}(\mathcal{S}_{n, k})}}|$ from above. Since $\Phi$ is a bijection, we have $|\Phi^{-1}(\mathcal{S}_{n, k})| = |\mathcal{S}_{n, k} |$. 
    Hence,
    \begin{align*}
        |{\mathcal{S}_{n, k} \cup \Phi^{-1}(\mathcal{S}_{n, k})} | &\leq 2|\mathcal{S}_{n,k}| 
        \\ &\le  2\cdot 2^n \cdot M(n,k) & \text{by Observation \ref{G1} \eqref{item:2^n diagonal choices}}
        \\ &\le 2^{n+1} n!  \sum_{j=0}^{k} \prod_{i=1}^{\left \lfloor{j/2}\right \rfloor } \frac{2^{2i}}{2^{2i}-1} \times \prod_{i=0}^{j-1} (2^{n-i}-1) & \text{by Corollary \ref{lem:count}}
    \end{align*}
    We bound each factor individually. First,
    \begin{equation*}
        \prod_{i=1}^{\left \lfloor{j/2}\right \rfloor } \frac{2^{2i}}{2^{2i}-1} = \prod_{i=1}^{\left \lfloor{j/2}\right \rfloor }\frac{1}{1-4^{-i}} \le  \prod_{i=1}^{\infty}\frac{1}{1-4^{-i}} \leq 2.
    \end{equation*}
    Next,
    \begin{equation*}
        \prod_{i=0}^{j-1} (2^{n-i}-1) \leq \prod_{i=0}^{j-1} 2^{n-i} = 2^{jn - \binom{j}{2}}.
    \end{equation*}
    Combining these gives
    \begin{equation}\label{eq:firstboundS_nk}
        |{\mathcal{S}_{n, k} \cup \Phi^{-1}(\mathcal{S}_{n, k})} | \leq 2^{n+1} n! \sum_{j=0}^{k} 2^{1+jn - \binom{j}{2}}  
    \end{equation}
     Now we wish to bound $ \sum_{j=0}^{k} 2^{1+jn - \binom{j}{2}}$.
    Define $f(j) \coloneqq 1+jn- \binom{j}{2}$ and note that $f(j+1) - f(j) = n-j \geq n-(n-t+1) \geq t-1 \geq 1$ for $j \le k \le n-t$. Hence, each term is at least twice as large as the one before it. In particular,
    \begin{equation}\label{eq:boundsum}
        \sum_{j=0}^{k} 2^{1+jn - \binom{j}{2}} \leq 2^{1+kn - \binom{k}{2}}(1+ 1/2 + 1/4 + \dots) \leq 2^{2+kn - \binom{k}{2}}.
    \end{equation}
    For $k = n-t$, we have
    \begin{equation}\label{eq:kn-k2}
        kn-\binom{k}{2} = \binom{n+1}{2} - \binom{t+1}{2}.
    \end{equation}
    Substituting \eqref{eq:boundsum} and \eqref{eq:kn-k2} into \eqref{eq:firstboundS_nk}, we obtain
    \begin{equation*}
         |{\mathcal{S}_{n, k} \cup \Phi^{-1}(\mathcal{S}_{n, k})} | \leq 2^{n+1} n! \cdot 2^{2 + \binom{n+1}{2} - \binom{t+1}{2}} \leq  2^{n+3 +n\log_2(n) + \binom{n+1}{2}-\binom{t+1}{2}}.
    \end{equation*}
   
   To bound $|S_n|$, we can apply Lemma \ref{lem:Mc2} together with the inequality $\prod_{i=1}^n (1-x_i) \geq 1 - \sum_{i=1}^n x_i$ for $x_i \in [0,1]$ to obtain
    \begin{equation*}
         |S_n| = 2^{n(n+1)/2} \prod_{1 \leq i \le n, i \text{ odd  }} \lb1-\frac{1}{2^i}\rb \geq 2^{\binom{n+1}{2}} \lb1 - \sum_{1 \leq i \le n, i \text{ odd  }}\frac{1}{2^i}\rb \geq \frac{1}{3} 2^{\binom{n+1}{2}}.
    \end{equation*}
    
    To complete the proof that $|{\mathcal{S}_{n, k} \cup \Phi^{-1}(\mathcal{S}_{n, k})} | < |S_n|$, it therefore suffices to show
    \begin{equation*}
       2^{n+3 +n\log_2(n) + \binom{n+1}{2}-\binom{t+1}{2}} <  \frac{1}{3} 2^{\binom{n+1}{2}}.
    \end{equation*}
    Taking $\log_2 $ of both sides and canceling $ \binom{n+1}{2}$, we require
    \begin{equation*}
        \binom{t+1}{2} > n\log_2 n + n + 3 + \log_2{3}.
    \end{equation*}
    This holds by choice of $t$, for $n$ sufficiently large.
    
    To conclude, it is therefore possible to choose $B \in S_n \setminus ({\mathcal{S}_{n, k} \cup \Phi^{-1}(\mathcal{S}_{n, k})} )$, given $n$ sufficiently large. Set $A = \Phi(B)$, and let $D_1 = T(A)$ and $D_2 = T(B)$. Then $\tmr(D_1) \geq k+1$ and $\tmr(D_2) \geq k+1$ and by Proposition \ref{propcon}, $\tmr(D_1 \to D_2) \leq n$. Hence 
    \begin{equation*}
        \frac{\tmr(D_1 \to D_2)}{\tmr(D_1) + \tmr( D_2)}  \leq \frac{n}{2(k+1)} = \frac{n}{2(n-t+1)} .
    \end{equation*}
    Since $t = o(n)$, this concludes the proof.
\end{proof}

Theorems \ref{T1} and \ref{thm:all_ell_r} now follow from the above proof of Theorem \ref{T2}.

 \begin{proof}[Proof of Theorem \ref{T1}]
Take $n$, $D_1$ and $D_2$ as in Theorem~\ref{T2}. Then $\inv(D_i) \geq \tmr(D_i) > n-O(\sqrt{n\log{n}})$ for all $i \in \{1,2\}$ and $\inv(D_1 \to D_2) \leq \tmr(D_1 \to D_2) +1 \le n+1$. Hence
\begin{align*}
    \frac{\inv(D_1 \to D_2)}{\inv(D_1) + \inv(D_2)} &<  \frac{n+1}{2\lb n-O(\sqrt{n\log{n}})\rb} 
    = \frac{1}{2} + O\lb\sqrt{\frac{\log{n}}{n}}\rb
\end{align*}
and taking $n$ sufficiently large gives the result.
 \end{proof}

Define $[D_1,D_2,\ldots,D_k]$ to be the oriented graph formed by taking successive dijoins $((\ldots(D_1 \rightarrow D_2) \rightarrow \ldots )\rightarrow D_k)$ (noting that the order the dijoins are applied does not matter) and let $[D^t] \coloneqq [\underbrace{D,D,\ldots,D}_{t}]$.
To prove Theorem~\ref{thm:all_ell_r} we will need the following special case of a result of Wang, Yang and Lu.

\begin{lemma}[{\cite[Corollary 1.10]{Wang}}]\label{lem:C3_dijoins}
For every oriented graph $D$ with $\inv(D) \ge 1$ and all $t,s \ge 0$ such that $t+s\ge 1$,
\begin{equation*}
\inv\lb\left[\big(\overrightarrow{C_3}\big)^t,D,\big(\overrightarrow{C_3}\big)^s\right] \rb = \begin{cases} \inv(D) + t +s-1 &\text{if $\inv(\overrightarrow{C_3}\rightarrow D) = \inv(D)$}\\
\inv(D) + t+s &\text{otherwise.}
\end{cases} 
\end{equation*}
\end{lemma}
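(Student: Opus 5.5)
We plan to work entirely in the matrix language of Section~\ref{sec:tmr}. The first step is a standard fact about symmetric matrices over $\mathbb{F}_2$. A symmetric matrix $M$ of rank $r$ can be written as $XX^{\top}$ with $X$ having $r$ columns if and only if $M$ is non-alternating, meaning some diagonal entry is $1$. If $M$ is alternating (zero diagonal and nonzero), one needs $r+1$ columns. A factorisation $M=XX^{\top}$ with $p$ columns is the same thing as a $p$-decycling family: the columns of $X$ are the indicator vectors of the inverted sets, up to diagonal entries, which can be adjusted freely. Hence
\[
\inv(D)=\tmr(D)+1 \iff \text{every decycling matrix of $D$ of rank $\tmr(D)$ is alternating},
\]
and $\inv(D)=\tmr(D)$ otherwise. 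This refines \eqref{rel} into an exact criterion, and we will use it at both ends of the argument.

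Next we need tmr-additivity when one side is $\overrightarrow{C_3}$. We will invoke the case $\tmr(D_i)=1$ of \eqref{eq:tmr_adds}, proved in \cite{Wang}, repeatedly; since $\tmr(\overrightarrow{C_3})=1$, this gives
\[
\tmr\lb\left[\big(\overrightarrow{C_3}\big)^t,D,\big(\overrightarrow{C_3}\big)^s\right]\rb=\tmr(D)+t+s.
\]
Combined with \eqref{rel}, the inversion number of the iterated dijoin therefore lies in $\{\tmr(D)+t+s,\ \tmr(D)+t+s+1\}$. If $\inv(D)=\tmr(D)$, this already yields the lower bound $\inv(D)+t+s$, and the trivial upper bound (invert each block separately) gives equality. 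In this case $\inv(\overrightarrow{C_3}\to D)=\inv(D)+1$, so we land in the ``otherwise'' branch, as claimed. So the real content is the case $\inv(D)=\tmr(D)+1$. There the answer is $\inv(D)+t+s-1$ exactly when the big dijoin admits a \emph{non-alternating} decycling matrix of minimum rank $\tmr(D)+t+s$.

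For this we use the block description from \cite{natcase}: any decycling matrix of a dijoin has the form $\begin{pmatrix}A&C\\C^{\top}&B\end{pmatrix}$, with $A,B$ decycling for the two sides and $C$ a staircase matrix. Applying this iteratively to the blocks $\overrightarrow{C_3}$, we will show that in a minimum-rank decycling matrix $M$ of the whole dijoin, the rank splits additively. Consequently, the principal submatrix on $V(D)$ together with one copy of $\overrightarrow{C_3}$ on either side is itself minimum rank for $\overrightarrow{C_3}\to D$ (respectively $D\to\overrightarrow{C_3}$). Moreover, $M$ can be non-alternating only if some such two-block submatrix is non-alternating, because the $\overrightarrow{C_3}$ blocks with rank exactly one contribute nothing else.

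Conversely, if $\inv(\overrightarrow{C_3}\to D)=\inv(D)$, we take a non-alternating minimum-rank matrix for $\overrightarrow{C_3}\to D$ and extend it block-diagonally (with $C=0$) by rank-one non-alternating matrices for the remaining copies. This gives the upper bound $\inv(D)+t+s-1$. To handle copies on the right and the case $t=0$, we will use arc-reversal symmetry. Reversing all arcs maps $[X,Y]$ to $[Y^{\mathrm{rev}},X^{\mathrm{rev}}]$, preserves $\inv$, and fixes $\overrightarrow{C_3}$. We also need the auxiliary equivalence $\inv(\overrightarrow{C_3}\to D)=\inv(D)\iff\inv(D\to\overrightarrow{C_3})=\inv(D)$.

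The main obstacle is this last step: controlling the staircase block $C$ next to a $\overrightarrow{C_3}$. We need to show that it can be chosen so that the rank contributions of the $\overrightarrow{C_3}$ copies stay exactly one each, and that the left/right equivalence holds. We would attack it first by explicit enumeration of the few possible interleavings of the $3$-cycle's acyclic order with that of $D$.
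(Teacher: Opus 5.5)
The paper gives no proof of this lemma; it is quoted from \cite{Wang}, so your argument has to stand on its own. Your first steps are sound. The criterion ``$\inv(D)=\tmr(D)$ iff some minimum-rank decycling matrix is non-alternating'' is correct. Iterating the $\tmr=1$ case of additivity gives $\tmr\bigl(\bigl[(\overrightarrow{C_3})^t,D,(\overrightarrow{C_3})^s\bigr]\bigr)=\tmr(D)+t+s$, and this settles the case $\inv(D)=\tmr(D)$.

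The case $\inv(D)=\tmr(D)+1$, however, is not proved, so there is a genuine gap. You claim that the rank of a minimum-rank decycling matrix of the long dijoin ``splits additively'', so that the principal submatrix on one copy of $\overrightarrow{C_3}$ plus $V(D)$ is minimum-rank for the two-block dijoin. This does not follow from additivity of $\tmr$, which controls only $\rank M$ and says nothing about the ranks of principal sub-blocks. The left/right equivalence and the control of the staircase block are deferred to an ``enumeration of interleavings''. That enumeration is not finite, since $D$ is arbitrary.

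The missing idea is parity, and it makes the obstacle disappear entirely. An alternating symmetric matrix over $\mathbb{F}_2$ has even rank. So if $\inv(D)=\tmr(D)+1$, your own criterion forces $\tmr(D)$ to be even. Hence $\tmr(\overrightarrow{C_3}\to D)=\tmr(D\to\overrightarrow{C_3})=\tmr(D)+1$ is odd. Every decycling matrix of that rank is therefore non-alternating, giving $\inv(\overrightarrow{C_3}\to D)=\inv(D\to\overrightarrow{C_3})=\tmr(D)+1=\inv(D)$.

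Thus the condition in the lemma is equivalent to $\inv(D)=\tmr(D)+1$, and your left/right equivalence comes for free. In that case, using only the lower bound by $\tmr$ and trivial subadditivity of $\inv$ over dijoins (with $D\to\overrightarrow{C_3}$ in place of $\overrightarrow{C_3}\to D$ when $t=0$),
\[
\tmr(D)+t+s\;\le\;\inv\bigl(\bigl[(\overrightarrow{C_3})^t,D,(\overrightarrow{C_3})^s\bigr]\bigr)\;\le\;\inv(\overrightarrow{C_3}\to D)+(t+s-1)\;=\;\inv(D)+t+s-1\;=\;\tmr(D)+t+s .
\]
No analysis of the staircase block $C$ is needed. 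Neither is the ``only if'' direction relating non-alternating matrices of the long dijoin to two-block submatrices: in this case the condition always holds.
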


Theorem \ref{thm:all_ell_r} now follows as a corollary of Lemma \ref{lem:C3_dijoins} and Theorem \ref{T1}.

\begin{proof}[Proof of Theorem \ref{thm:all_ell_r}]
    Take $D_1$ and $D_2$ such that $\inv(D_1 \rightarrow D_2) \le \inv(D_1) + \inv(D_2) - 3$. Such a $D_1$ and $D_2$ exist by Theorem \ref{T1}. We will show that the conclusion of the theorem holds for any $\ell,r \ge \max\lb\inv(D_1),\inv(D_2)\rb$.
    
    Note that by Lemma \ref{lem:C3_dijoins}, there exists $t \in \{\ell - \inv(D_1), \ell - \inv(D_1)+1\}$ such that 
    \begin{equation*}
\inv\lb\left[\big(\overrightarrow{C_3}\big)^{t},D_1\right] \rb = \ell.
\end{equation*}
Let $D_1' = \left[\big(\overrightarrow{C_3}\big)^{t},D_1\right]$.

Similarly, there exists $s\in \{r - \inv(D_2), r - \inv(D_2)+1\}$ such that 
    \begin{equation*}
\inv\lb\left[D_2,\big(\overrightarrow{C_3}\big)^{s}\right] \rb = r.
\end{equation*}
Let $D_2' = \left[D_2,\big(\overrightarrow{C_3}\big)^{s}\right] $.

We have, by Lemma \ref{lem:C3_dijoins} again,
\begin{align*}
\inv(D_1'\rightarrow D_2') &= \inv\left(\left[\big(\overrightarrow{C_3}\big)^t,D_1\rightarrow D_2,\big(\overrightarrow{C_3}\big)^s\right] 
\right) \\
&\le \inv(D_1\rightarrow D_2) + t + s \\
&\le (\inv(D_1) + \inv(D_2)-3) + (\ell - \inv(D_1)+1) +  (r - \inv(D_2)+1) \\
& = \ell + r - 1 = \inv(D_1') + \inv(D_2') -1. 
\end{align*}
Thus $D_1'$ and $D_2'$ have the required properties.
\end{proof}
 
 \section{Concluding Remarks}\label{sec:conclusion}

 The results that we supply show that additivity can fail badly, but they do not provide explicit examples. The proofs are non-constructive, and there is currently no general method in the literature for constructing explicit tournaments with large inversion numbers. Finding explicit tournaments for which additivity fails badly, or even explicit counterexamples to additivity of tournament minimum rank, remains an interesting and open problem.
 
 We note that setting $n = 53$, $t=27$ and $k =26$ in the proof of Theorem \ref{T2}, shows that there exists a counterexample to the additivity conjecture for $|D_1|=|D_2|=53$. We by no means claim that this is the minimum number of vertices for which the conjecture breaks as we expect that many other constructions can lead to an improvement on this bound. Indeed, even just a more careful analysis of the bounds in our proof may well reduce the number of vertices somewhat.
Similarly, in Theorem \ref{thm:all_ell_r} we may take $k = 33$ (using values of $n=63$ and $t=30$ in the proof of Theorem \ref{T2}), but this may not be optimal.

 \bigskip

 Theorem \ref{T1} raises the related interesting question of whether the inversion number of a dijoin is symmetric, that is, whether $\inv(D_1 \to D_2) = \inv(D_2 \to D_1)$ holds for all tournaments $D_1$ and $D_2$. This was conjectured to be true by Wang, Yang and Lu. 
 
   \begin{conj}[{\cite[Conjecture 1.11]{Wang}}]\label{conj}
    For all tournaments $D_1,D_2$ we have
     \begin{equation*}
         \inv(D_1 \to D_2) = \inv(D_2 \to D_1).
     \end{equation*}
 \end{conj}
 
 One might naturally expect this symmetry but our construction suggests that this is not clear. The rank of a decycling matrix for a dijoin depends on how the transitive order of the two tournaments intertwine and reversing the direction of the dijoin alters this intertwining.
 Contrary to Wang, Yang and Lu, we suspect that for sufficiently large  $D_1,D_2$ an asymmetry can occur 
 (and similarly with $\tmr$ in place of $\inv$).

Let $\hat D$ denote the tournament on $V(D)$ attained by flipping every arc of $D$. 
A set of inversions decycle $D$ if and only if they decycle $\hat D$ so $\inv(D) = \inv(\hat D)$. Moreover, $\hat{(D_2 \to D_1)}= \hat D_1 \to \hat D_2$. Hence 
  \begin{equation*}
      \inv(D_2 \to D_1) = \inv\lb\hat{D_2 \to D_1}\rb =\inv( \hat D_1 \to \hat D_2) 
  \end{equation*}
 so Conjecture \ref{conj} reduces to whether the inversion number of a dijoin is unaltered when both parts are reversed. 
 Note that 
 \begin{equation*}
      \lvv \inv(D_1 \to D_2) - \inv(\hat D_1 \to \hat D_2) \rvv \leq 2 
 \end{equation*}
since we can always invert $V(D_1)$ and $V(D_2)$ separately as our first two inversions.
Therefore, any disproof of Conjecture~\ref{conj} would need much more precise bounds on the inversion numbers involved than those given by the methods in this paper.

\section{Acknowledgments}
We acknowledge that AI tools played a role in the research process, even providing key steps in some of the proofs. We as authors have verified and digested such steps and remain firmly responsible for the content and rigor of this paper. The text of this manuscript was written by the authors.
\printbibliography

\end{document}